\numberwithin{equation}{section}
\theoremstyle{plain}
\newtheorem{theorem}{Theorem}[section] 
\newtheorem{lemma}[theorem]{Lemma}
\newtheorem{definition}[theorem]{Definition}
\newtheorem{proposition}[theorem]{Proposition}
\theoremstyle{definition}
\newcommand{\bN}{\mathbb{N}}
\newcommand{\cS}{\mathcal{S}}
\newcommand{\cB}{\mathcal{B}}
\newcommand{\cK}{\mathcal{K}}
\newcommand{\la}{\langle}
\newcommand{\ra}{\rangle}
\newcommand{\cH}{\mathcal{H}}
\newcommand{\bofh}{\cB(\cH)}
\newcommand{\cP}{\mathcal{P}}
\newcommand{\cM}{\mathcal{M}}
\newcommand{\bC}{\mathbb{C}}
\newcommand{\bR}{\mathbb{R}}
\newcommand{\bZ}{\mathbb{Z}}
\newcommand{\spn}{\text{span }}
\title[Bipartite matrix-valued tensor product correlations]{Bipartite matrix-valued tensor product correlations that are not finitely representable}
\author{Samuel J. Harris}
\address{University of Waterloo
Department of Pure Mathematics 
200 University Ave. W.
Waterloo, Ontario 
Canada N2L 3G1}
\email{sj2harri@uwaterloo.ca}
\begin{document}
\begin{abstract}
We consider the matrix-valued generalizations of bipartite tensor product quantum correlations and bipartite infinite-dimensional tensor product quantum correlations, respectively. These sets are denoted by $C_q^{(n)}(m,k)$ and $C_{qs}^{(n)}(m,k)$, respectively, where $m$ is the number of inputs, $k$ is the number of outputs, and $n$ is the matrix size. We show that, for any $m,k \geq 2$ with $(m,k) \neq (2,2)$, there is an $n \leq 4$ for which we have the separation $C_q^{(n)}(m,k) \neq C_{qs}^{(n)}(m,k)$.
\end{abstract}

\maketitle

\section{Introduction}

The study of quantum bipartite correlations has been of great importance to understanding the nature of entanglement. These correlations describe the probability of obtaining certain outputs in a separated system, assuming that certain inputs were given. We assume that we are working in a finite input, finite output system. For such a correlation, we will denote by $p(a,b|x,y)$ the probability that Alice and Bob output $a$ and $b$, respectively, given that they had inputs $x$ and $y$, respectively. The different models for these probability distributions in $m$ inputs and $k$ outputs have been studied extensively in recent years. The models that we will focus on in this paper are the quantum (finite-dimensional tensor product) correlations, the quantum spatial correlations, and the quantum approximate correlations. (See \cite{fritz,junge+,tsirelson} for more information on these correlation sets.) Recall that a projection-valued measure (PVM) with $k$ outputs on a Hilbert space $\cH$ is a set of orthogonal projections $\{P_i\}_{i=1}^k$ on $\cH$ such that $\sum_{i=1}^k P_i=I_{\cH}$. The set of quantum (finite-dimensional tensor product) correlations, $C_q(m,k)$, is the set of all correlations of the form 
$$p(a,b|x,y)=\la (E_{a,x} \otimes F_{b,y})\zeta,\zeta \ra,$$
where there are finite-dimensional Hilbert spaces $\cH_A$ and $\cH_B$ such that $\zeta \in \cH_A \otimes \cH_B$ is a unit vector; for each $1 \leq x \leq m$, $\{E_{a,x}\}_{a=1}^k$ is a PVM on $\cH_A$; and for each $1 \leq y \leq m$, $\{F_{b,y}\}_{b=1}^k$ is a PVM on $\cH_B$. The set $C_{qs}(m,k)$ of quantum spatial correlations is defined in the same manner, except that we drop the assumption that $\cH_A$ and $\cH_B$ are finite-dimensional. The set of quantum approximate correlations, $C_{qa}(m,k)$, is defined to be the closure of the set $C_q(m,k) \subseteq \bR^{m^2k^2}$. It is known that $C_{qa}(m,k)$ is also the closure of $C_{qs}(m,k)$ \cite{SW}. In particular,
$$C_q(m,k) \subseteq C_{qs}(m,k) \subseteq C_{qa}(m,k),$$
and each of these sets is convex. Until recently, it was unknown whether either of these containments were strict. W.~Slofstra \cite{slofstra17} first showed that there exist large values of $m$ and $k$ for which $C_{qs}(m,k) \neq C_{qa}(m,k)$. Subsequently, S.-J.~Kim, V.~Paulsen and C.~Schafhauser \cite{KSP} gave an example of a synchronous correlation in $C_{qa}(m,k) \setminus C_{qs}(m,k)$. The smallest known example was found by K.~Dykema, V.~Paulsen and J.~Prakash \cite{DPP}, who showed that $C_{qs}(5,2) \neq C_{qa}(5,2)$. On the other hand, the question of whether $C_q(m,k)=C_{qs}(m,k)$ remained open in general, until very recently, when A.~Codalangelo and J.~Stark \cite{CS} proved that $C_q(5,3) \neq C_{qs}(5,3)$.

Along a similar line to the sets $C_t(m,k)$ for $t \in \{q,qs,qa\}$, a generalization of these sets to so-called ``matrix-valued correlations" was introduced in \cite{fritz,junge+}. Part of the motivation for introducing these sets was that they give a useful description of Connes' embedding problem in operator algebras \cite{connes}. These sets led to N.~Ozawa proving in \cite{ozawa} that Connes' embedding problem is, in fact, equivalent to determining whether $C_{qa}(m,k)$ is equal to the set $C_{qc}(m,k)$ of quantum commuting correlations for all $m,k \geq 2$ with $(m,k) \neq (2,2)$.

These matrix-valued correlations also arise naturally as strategies for what are known as bipartite steering games (see \cite{fritz} for more information on these games). Moreover, in recent work of L.~Gao, the author and M.~Junge \cite{gaoharrisjunge}, it is shown that one can find separations of the matrix-valued correlation sets in the $qa$ and $qs$ models for smaller input and output sizes than those that are known in the case of scalar-valued correlations.

\begin{definition}
Let $m,k \geq 2$ and $n \in \bN$. We define the set of $M_n$-valued quantum correlations to be
$$C_q^{(n)}(m,k)=\left\{ \left( W^*(E_{a,x} \otimes F_{b,y})W \right)_{a,b,x,y} \right\},$$
where, for each $x \in \{1,...,m\}$ and $y \in \{1,...,m\}$, $\{E_{a,x}\}_{a=1}^k$ and $\{F_{b,y}\}_{b=1}^k$ are PVMs on finite-dimensional Hilbert spaces $\cH_A$ and $\cH_B$ respectively, and $W:\bC^n \to \cH_A \otimes \cH_B$ is an isometry.

We define the set of $M_n$-valued quantum spatial correlations $C_{qs}^{(n)}(m,k)$ in the same way, only that we no longer require that $\cH_A$ and $\cH_B$ be finite-dimensional.
\end{definition}

Note that both of $C_q^{(n)}(m,k)$ and $C_{qs}^{(n)}(m,k)$ are convex subsets of $(M_n)^{m^2k^2}$ \cite{fritz,junge+}.  We will often denote $P(a,b|x,y)=W^*(E_{a,x} \otimes F_{b,y})W$, for fixed $a,b \in \{1,...,k\}$ and $x,y \in \{1,...,m\}$. We will also use the notation for ``matrix-valued" marginal distributions; i.e., we will define
\begin{align} P_A(a|x)&=\sum_{b=1}^k P(a,b|x,y)=W^*(E_{a,x} \otimes I)W, \text{ and} \label{alicemarginal}\\
P_B(b|y)&=\sum_{a=1}^k P(a,b|x,y)=W^*(I \otimes F_{b,y})W. \label{bobmarginal}
\end{align}
The marginal distribution $P_A(a|x)$ does not depend on the choice of $y$ in the sum; similarly, $P_B(b|y)$ does not depend on the choice of $x$.

\begin{definition}
We define $C_{qa}^{(n)}(m,k)$ to be the closure of $C_q^{(n)}(m,k)$ in $(M_n)^{m^2k^2}$.
\end{definition}

It follows from a theorem of J.~Bunce and N.~Salinas \cite{buncesalinas} that $C_{qa}^{(n)}(m,k)$ is also the closure of $C_{qs}^{(n)}(m,k)$.  We also have the inclusions
$$C_q^{(n)}(m,k) \subseteq C_{qs}^{(n)}(m,k) \subseteq C_{qa}^{(n)}(m,k).$$

In \cite{gaoharrisjunge}, it was shown that $C_{qs}^{(5)}(3,2) \neq C_{qa}^{(5)}(3,2)$ and $C_{qs}^{(13)}(2,3) \neq C_{qa}^{(13)}(2,3)$. In this paper, we prove analogous separations between the $q$ and $qs$ models. 

\begin{theorem}
\label{theorem: matrix version of q always different from qs}
For any $m,k \in \bN$ with $m,k \geq 2$, $(m,k) \neq (2,2)$, there is $n \leq 4$ such that $C_q^{(n)}(m,k) \neq C_{qs}^{(n)}(m,k)$.
\end{theorem}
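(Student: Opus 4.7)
The plan is to reduce the statement, via a monotonicity argument in the input and output cardinalities, to the two base cases $(m,k)=(3,2)$ and $(m,k)=(2,3)$, and to establish these base cases by leveraging the Coladangelo--Stark scalar separation $C_q(5,3)\neq C_{qs}(5,3)$. Note that $(m,k)\neq(2,2)$ together with $m,k\geq 2$ forces either $m\geq 3$ (so $(m,k)\geq(3,2)$ componentwise) or $k\geq 3$ (so $(m,k)\geq(2,3)$ componentwise), so the two base cases together cover the entire range in the theorem.

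\textbf{Monotonicity step.} I would first prove that, for a fixed matrix size $n$, if $C_q^{(n)}(m_0,k_0)\neq C_{qs}^{(n)}(m_0,k_0)$ then $C_q^{(n)}(m,k)\neq C_{qs}^{(n)}(m,k)$ for every $m\geq m_0$ and $k\geq k_0$. Given a witness $P\in C_{qs}^{(n)}(m_0,k_0)\setminus C_q^{(n)}(m_0,k_0)$ realized by PVMs $\{E_{a,x}\}$, $\{F_{b,y}\}$ on $\cH_A,\cH_B$ and an isometry $W:\bC^n\to\cH_A\otimes\cH_B$, extend the PVMs on the same Hilbert spaces by duplicating the $x=1$ measurement for all new inputs $x>m_0$ and by adjoining zero projections as new outputs $a>k_0$. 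The resulting correlation $\widetilde{P}$ lies in $C_{qs}^{(n)}(m,k)$, vanishes on the new output coordinates, and coincides with $P$ on the original ones. If $\widetilde{P}$ were realized finite-dimensionally, then merging the zero-valued outputs into the $k_0$-th projection and discarding the duplicated inputs would yield a finite-dimensional realization of $P$, contradicting the choice of $P$.

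\textbf{Base cases.} For each of $(m,k)=(3,2)$ and $(m,k)=(2,3)$, I would construct, starting from a witness $p\in C_{qs}(5,3)\setminus C_q(5,3)$ of Coladangelo--Stark, an $M_n$-valued correlation $P^\flat\in C_{qs}^{(n)}(m,k)\setminus C_q^{(n)}(m,k)$ for some $n\leq 4$. The guiding idea is that the matrix dimension $n$ provides extra classical coordinates that can absorb either surplus inputs or surplus outputs of $p$: I would take the isometry $W:\bC^n\to\cH_A\otimes\cH_B$ to be a block-diagonal amplification of the infinite-dimensional state $\zeta$ realizing $p$, and choose matrix-valued PVMs whose blocks implement, for different matrix coordinates, different scalar inputs or outputs of $p$. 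By construction, the entries of $P^\flat$ encode enough of the scalar values of $p$; any finite-dimensional realization of $P^\flat$ would, after extracting appropriate entries of the matrix $W^*(E_{a,x}\otimes F_{b,y})W$ and compressing the PVMs to invariant subspaces, produce a finite-dimensional realization of $p$ itself, contradicting Coladangelo--Stark.

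\textbf{Main obstacle.} The technical heart of the argument is to design, in each of the two base cases, the isometry $W$ and the matrix-valued PVMs so that simultaneously: (a) $P^\flat$ is actually attainable in $C_{qs}^{(n)}(m,k)$ from the known infinite-dimensional witness for $p$; and (b) any finite-dimensional realization of $P^\flat$ unpacks cleanly into a finite-dimensional realization of $p$, rather than merely a finite-dimensional realization of some projection or marginal of $p$. Packing five scalar inputs and three scalar outputs of $p$ into $m\leq 3$ matrix-valued inputs and $k\leq 3$ matrix-valued outputs, while keeping $n\leq 4$, is the delicate combinatorial step: the $(3,2)$ case asks for an encoding that trades outputs for matrix coordinates, and the $(2,3)$ case asks for one that trades inputs for matrix coordinates, so two genuinely different constructions will be needed.
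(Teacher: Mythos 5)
Your reduction to the base cases $(3,2)$ and $(2,3)$ via padding (duplicating an input, adjoining zero projections as extra outputs) is sound, and is essentially how the paper passes from its Theorems \ref{theorem: 3 inputs 2 outputs} and \ref{theorem: 2 inputs 3 outputs} to the general statement. The gap is in the base cases themselves, which you do not actually prove. Your guiding idea --- that the matrix dimension $n$ supplies ``extra classical coordinates that can absorb surplus inputs or outputs'' of a Coladangelo--Stark witness $p\in C_{qs}(5,3)\setminus C_q(5,3)$ --- conflates two different resources. Enlarging $n$ only enlarges the isometry $W:\bC^n\to\cH_A\otimes\cH_B$; it lets you probe the \emph{same} $m$ PVMs per party with $n$ state vectors instead of one, but it does not create new measurements. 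With $m=3$ inputs and $k=2$ outputs each party has only three two-outcome PVMs, and there is no evident way to recover five three-outcome PVMs from them, so the step ``unpacking a finite-dimensional realization of $P^\flat$ into a finite-dimensional realization of $p$'' has no construction behind it. Moreover, even granting some encoding, forcing a finite-dimensional model of $P^\flat$ to reproduce the exact scalar values of $p$ requires a rigidity argument: matrix entries only give inner products $\la X\eta_j,\eta_i\ra$, and to convert these into the operator identities needed to extract $p$ one needs Cauchy--Schwarz saturation (inner products of modulus one), which a generic scalar witness is not known to provide.

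The paper does not use the Coladangelo--Stark result at all. Its base cases rest on a different key lemma (Theorem \ref{theorem: spatial infinite-dimensional unitaries}, from \cite{gaoharrisjunge}): a short list of inner-product constraints on unitaries $T_1,T_2,U_1,U_2$ and two unit vectors that can be satisfied if and only if the underlying Hilbert spaces are infinite-dimensional. For $(3,2)$ the three self-adjoint unitaries are viewed as generators of $*_3\bZ_2$ and the $U_i$ are products of two of them; for $(2,3)$ the group $\bZ_2*\bZ$ is embedded into $\bZ_2*\bZ_3$ via $u\mapsto hgh$ (the Ping-Pong Lemma), and Arveson extension plus Stinespring dilation transport the infinite-dimensional witness into a representation of $C^*(\bZ_2*\bZ_3)$. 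The finite-dimensional impossibility then follows from carefully chosen matrix entries equal to $1$ or $1/\sqrt{2}$ which, via Cauchy--Schwarz, pin down the image vectors and reproduce the forbidden infinite-dimensional configuration. To salvage your strategy you would need to supply an explicit encoding together with a self-testing argument of this kind; as written, the heart of the proof is missing.
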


In particular, in Theorem \ref{theorem: 3 inputs 2 outputs} we prove that $C_q^{(3)}(3,2) \neq C_{qs}^{(3)}(3,2)$, and in Theorem \ref{theorem: 2 inputs 3 outputs} we prove that $C_q^{(4)}(2,3) \neq C_{qs}^{(4)}(2,3)$. Our methods draw on using an explicit tensor product of unitary representations of the group $\bZ_2*\bZ$ and associated behaviour that cannot be witnessed on finite-dimensional Hilbert spaces. We use certain facts about group embeddings regarding free groups to translate these representations into the context of matrix-valued correlations. The interested reader can see \cite{delaharpe} for more information on these group embeddings.

In the case when $n=1$, the only known separation of $C_q(m,k)$ from $C_{qs}(m,k)$ is the recent result of A.~Codalangelo and J.~Stark, which says that $C_q(5,3) \neq C_{qs}(5,3)$. On the other hand, for smaller input and output sets, it is not known whether $C_q(m,k)=C_{qs}(m,k)$. It is widely thought that $C_q(3,2) \neq C_{qs}(3,2)$; indeed, it was conjectured by K.F.~P\'{a}l and T.~V\'{e}rtesi \cite{palvertesi} that the I3322 inequality should have a maximal violation in $C_{qs}(3,2)$ with no such violation in $C_q(3,2)$. If this conjecture were true, then it would imply that $C_q(3,2) \neq C_{qs}(3,2)$. Nonetheless, determining whether $C_q(m,k) \neq C_{qs}(m,k)$ for input and output sets smaller than the example of \cite{CS} remains open. However, if we allow for matrix-valued correlations, then by  Theorem \ref{theorem: matrix version of q always different from qs}, an analogue of the separation $C_q(m,k) \neq C_{qs}(m,k)$ will hold for any pair $(m,k)$ with $m,k \geq 2$ and $(m,k) \neq (2,2)$.

The paper is organized as follows. In Section \S2, we will prove Theorem \ref{theorem: 3 inputs 2 outputs}, which states that $C_q^{(3)}(3,2) \neq C_{qs}^{(3)}(3,2)$. In Section \S3, we will prove Theorem \ref{theorem: 2 inputs 3 outputs}, which states that, for some $n \in \{2,3,4\}$, we have $C_q^{(n)}(2,3) \neq C_{qs}^{(n)}(2,3)$. To work towards proving Theorems \ref{theorem: 3 inputs 2 outputs} and \ref{theorem: 2 inputs 3 outputs}, we need two results from \cite{gaoharrisjunge}.

\begin{theorem}
\label{theorem: spatial infinite-dimensional unitaries}
Let $\cH$ be a Hilbert space. The following are equivalent:
\begin{enumerate}
\item
$\cH$ is infinite-dimensional;
\item
There exist unitaries $T_1,T_2,U_1,U_2$ on $\cH$ and unit vectors $\zeta_1,\zeta_2 \in \cH \otimes \cH$ such that
\begin{align}
\la (U_1 \otimes U_2)\zeta_1,\zeta_2 \ra&=\la (U_1 \otimes U_2)\zeta_1,\zeta_1 \ra=\frac{1}{\sqrt{2}} \label{GHJspatialnonfiniteexample} \\
\la (T_1 \otimes I) \zeta_1,\zeta_1 \ra&=\la (I \otimes T_2)\zeta_1,\zeta_1 \ra=1 \\
\la (T_1 \otimes I) \zeta_2,\zeta_2 \ra&=\la (I \otimes T_2) \zeta_2,\zeta_2 \ra=-1. \label{GHJmarginals}
\end{align}
\end{enumerate}
\end{theorem}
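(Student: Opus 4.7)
The plan is to prove the two implications separately. For (2) $\Rightarrow$ (1), I would first upgrade the marginal inner-product conditions to exact eigenvalue equations: since $T_1 \otimes I$ is unitary and $\zeta_1$ is a unit vector, the equality case of Cauchy--Schwarz in $\langle (T_1 \otimes I)\zeta_1, \zeta_1\rangle = 1$ forces $(T_1 \otimes I)\zeta_1 = \zeta_1$, and the analogous arguments applied to the remaining three marginal conditions yield $(I \otimes T_2)\zeta_1 = \zeta_1$, $(T_1 \otimes I)\zeta_2 = -\zeta_2$, and $(I \otimes T_2)\zeta_2 = -\zeta_2$. Writing $E_i^{\pm}$ for the $\pm 1$-eigenspaces of $T_i$, the identity $(V_1 \otimes \cH) \cap (\cH \otimes V_2) = V_1 \otimes V_2$ for closed subspaces (seen via the product projection $P_1 \otimes P_2$) places $\zeta_1 \in E_1^+ \otimes E_2^+$ and $\zeta_2 \in E_1^- \otimes E_2^-$; since $E_1^+ \perp E_1^-$, the vectors $\zeta_1,\zeta_2$ are orthonormal. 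Combining \eqref{GHJspatialnonfiniteexample} with $\|(U_1 \otimes U_2)\zeta_1\| = 1$ then forces $(U_1 \otimes U_2)\zeta_1 = \tfrac{1}{\sqrt{2}}(\zeta_1 + \zeta_2)$.

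The next step is a Schmidt-rank argument. Let $r_j$ denote the Schmidt rank of $\zeta_j$. Because the Schmidt decompositions of $\zeta_1$ and $\zeta_2$ involve left (respectively right) vectors lying in the orthogonal subspaces $E_1^+, E_1^-$ (respectively $E_2^+, E_2^-$), concatenating these two decompositions exhibits the Schmidt rank of $\zeta_1 + \zeta_2$ as $r_1 + r_2$. Local unitaries preserve Schmidt rank, so $(U_1 \otimes U_2)\zeta_1$ has Schmidt rank $r_1$, and the equation above yields $r_1 = r_1 + r_2$. Since $\zeta_2$ is a unit vector, $r_2 \geq 1$, so this is impossible whenever $r_1 < \infty$. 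If $\cH$ were finite-dimensional, then $r_1 \leq \dim \cH < \infty$ and we would have a contradiction; hence $\cH$ is infinite-dimensional.

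For (1) $\Rightarrow$ (2), I would give an explicit construction on $\cH = \ell^2(\bZ)$ with orthonormal basis $\{e_n\}_{n \in \bZ}$. Take $T_1 = T_2 = T$ to be the self-adjoint unitary defined by $Te_n = e_n$ for $n \geq 0$ and $Te_n = -e_n$ for $n < 0$, and take $U_1 = U_2 = U$ to be the bilateral shift $Ue_n = e_{n-1}$. Set
\begin{equation*}
\zeta_1 = \sum_{n \geq 0} 2^{-(n+1)/2}\, e_n \otimes e_n, \qquad \zeta_2 = e_{-1} \otimes e_{-1}.
\end{equation*}
Then $\|\zeta_1\| = \|\zeta_2\| = 1$, and the supports of $\zeta_1, \zeta_2$ immediately give the eigenvalue conditions for $T_1 \otimes I$ and $I \otimes T_2$. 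Applying the shift and reindexing yields
\begin{equation*}
(U \otimes U)\zeta_1 = \sum_{n \geq 0} 2^{-(n+1)/2}\, e_{n-1} \otimes e_{n-1} = \tfrac{1}{\sqrt{2}}\, e_{-1} \otimes e_{-1} + \sum_{n \geq 0} 2^{-(n+2)/2}\, e_n \otimes e_n = \tfrac{1}{\sqrt{2}}(\zeta_1 + \zeta_2),
\end{equation*}
and taking inner products with $\zeta_1$ and $\zeta_2$ recovers \eqref{GHJspatialnonfiniteexample}.

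The principal obstacle is the construction in (1) $\Rightarrow$ (2): the relation $(U_1 \otimes U_2)\zeta_1 = \tfrac{1}{\sqrt{2}}(\zeta_1+\zeta_2)$ demands that the Schmidt spectrum of $\zeta_1$ agree, after rescaling by $1/\sqrt{2}$, with the disjoint union of the Schmidt spectra of $\zeta_1$ and $\zeta_2$. Taking $\zeta_2$ of Schmidt rank one reduces this to finding a self-similar sequence $\{\lambda_k\}$ satisfying $\{\lambda_k\} = \{\lambda_k/\sqrt{2}\} \cup \{1/\sqrt{2}\}$ as multisets, which is solved by the geometric choice $\lambda_k = 2^{-k/2}$; the bilateral shift then provides the local unitary implementing the required rearrangement, and its unitarity depends essentially on $\cH$ being infinite-dimensional.
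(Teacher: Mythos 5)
Your proof is correct, and it is worth noting that the paper itself does not prove this theorem: it imports the statement from \cite{gaoharrisjunge} and only records an explicit witness for the direction (1) $\Rightarrow$ (2) in Proposition \ref{proposition: spatial rep of tensor of z2*z}. Your construction for that direction is the same witness up to a reflection of the index set (the paper supports $\zeta_1$ on $j<0$ with weights $(\sqrt{2})^{j}$ and shifts upward; you support it on $n\geq 0$ with weights $2^{-(n+1)/2}$ and shift downward), and your computation verifying $(U\otimes U)\zeta_1=\tfrac{1}{\sqrt{2}}(\zeta_1+\zeta_2)$ checks out. For an arbitrary infinite-dimensional $\cH$ rather than $\ell^2(\bZ)$ you should add the one-line remark (which the paper makes after the theorem statement) that one embeds $\ell^2(\bZ)$ as a closed subspace and extends $T,U$ by the identity on the orthocomplement. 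The genuinely new content you supply is a self-contained proof of (2) $\Rightarrow$ (1): the Cauchy--Schwarz rigidity upgrading the marginal conditions to eigenvector equations, the identification of the intersection $(E_1^{\pm}\otimes\cH)\cap(\cH\otimes E_2^{\pm})=E_1^{\pm}\otimes E_2^{\pm}$ via commuting projections, and the Schmidt-rank bookkeeping $r_1=r_1+r_2$ are all valid, and the orthogonality of the eigenspaces is exactly what makes the concatenated Schmidt decompositions of $\zeta_1$ and $\zeta_2$ into a genuine Schmidt decomposition of their sum. This is an attractive, elementary argument that isolates precisely why finite Schmidt rank (hence finite dimension) is incompatible with the relation $(U_1\otimes U_2)\zeta_1=\tfrac{1}{\sqrt{2}}(\zeta_1+\zeta_2)$, whereas the paper relies on the external reference for this implication.
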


In Theorem \ref{theorem: spatial infinite-dimensional unitaries}, the condition that $\cH$ be infinite-dimensional can easily be replaced with Hilbert spaces $\cH_A$ and $\cH_B$ that are infinite-dimensional, and the tensor product $\cH \otimes \cH$ can be replaced with $\cH_A \otimes \cH_B$. Indeed, if we start with Hilbert spaces $\cH_A$ and $\cH_B$ with $\dim(\cH_A)>\dim(\cH_B)$, then we may enlarge $\cH_B$ to have the same dimension as $\cH_A$ and extend the unitaries on $\cH_B^{\perp}$ by defining them to be the identity on $\cH_B^{\perp}$, and equations (\ref{GHJspatialnonfiniteexample})--(\ref{GHJmarginals}) will still hold.

The following explicit representation satisfying Theorem \ref{theorem: spatial infinite-dimensional unitaries} in the case of $\cH=\ell^2(\bZ)$ will be helpful for our purposes (see \cite{gaoharrisjunge}).

\begin{proposition}
\label{proposition: spatial rep of tensor of z2*z}
Let $\cH=\ell^2(\bZ)$, and let $T,U$ be unitaries on $\cH$ given by
\begin{equation}
Te_j=\begin{cases} e_j & j<0 \\ -e_j & j \geq 0 \end{cases}  \, \, \, \, \, \text{ and } \, \, \, \, \, Ue_j=e_{j+1} \label{GHJunitaries}.
\end{equation}
Define unit vectors $\zeta_1,\zeta_2$ in $\cH \otimes \cH$ via

\begin{equation}
\zeta_1=\sum_{j<0} (\sqrt{2})^j e_j \otimes e_j \, \, \, \, \, \text{ and } \, \, \, \, \, \zeta_2=e_0 \otimes e_0. \label{GHJvectors}
\end{equation}
Then setting $T_1=T_2=T$ and $U_1=U_2=U$ yields equations (\ref{GHJspatialnonfiniteexample})--(\ref{GHJmarginals}) of Theorem \ref{theorem: spatial infinite-dimensional unitaries}.
\end{proposition}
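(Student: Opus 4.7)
The proof is a direct verification, so my plan is to compute each of the six inner products in turn and show that the geometric-series structure of $\zeta_1$ makes everything work out. First, I would record the two standing observations that drive every computation: (i) on the subspace $\overline{\mathrm{span}}\{e_j : j<0\}$ the operator $T$ acts as the identity, while on $\mathrm{span}\{e_j : j \geq 0\}$ it acts as $-I$, and (ii) $U$ is the bilateral shift, so $(U \otimes U)(e_j \otimes e_j) = e_{j+1} \otimes e_{j+1}$. A quick sanity check that $\zeta_1$ is a unit vector reduces to $\sum_{j<0} 2^j = 1$, and $\zeta_2$ is manifestly unit.

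The $T$-identities are essentially free. Since every index $j$ occurring in the defining sum of $\zeta_1$ is strictly negative, observation (i) gives $(T \otimes I)\zeta_1 = \zeta_1 = (I \otimes T)\zeta_1$, so the two marginal identities $\langle (T \otimes I)\zeta_1,\zeta_1\rangle = \langle (I \otimes T)\zeta_1,\zeta_1\rangle = 1$ hold trivially. For $\zeta_2 = e_0 \otimes e_0$, observation (i) gives $T e_0 = -e_0$, and hence $(T \otimes I)\zeta_2 = -\zeta_2 = (I \otimes T)\zeta_2$, yielding the two $-1$ marginals.

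For the $U$-identities, I would first reindex: by observation (ii),
\[
(U \otimes U)\zeta_1 = \sum_{j<0} (\sqrt{2})^{j}\, e_{j+1}\otimes e_{j+1} = \sum_{k\leq 0} (\sqrt{2})^{k-1}\, e_k \otimes e_k.
\]
Pairing this against $\zeta_2 = e_0\otimes e_0$ isolates the $k=0$ term, producing $(\sqrt{2})^{-1} = \tfrac{1}{\sqrt{2}}$. Pairing against $\zeta_1$ itself kills the $k=0$ term (since $\zeta_1$ has no $e_0 \otimes e_0$ component) and leaves
\[
\sum_{k<0} (\sqrt{2})^{k-1}(\sqrt{2})^{k} \;=\; \frac{1}{\sqrt 2}\sum_{k<0} 2^{k} \;=\; \frac{1}{\sqrt 2},
\]
again by the same geometric series as in the normalization of $\zeta_1$.

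There is no real obstacle in this proposition; everything follows by direct computation once one notices the precise way in which the support of $\zeta_1$ lies entirely in the $+1$-eigenspace of $T$ and the defining exponent $(\sqrt{2})^j$ is tuned so that applying $U \otimes U$ shifts the sum into a form whose inner product with both $\zeta_1$ and $\zeta_2$ produces the same value $1/\sqrt 2$. The only point worth double-checking while writing it up is the bookkeeping in the index shift $j \mapsto j+1$ when computing $(U\otimes U)\zeta_1$, to make sure the $k=0$ term is correctly included when paired with $\zeta_2$ and correctly absent when paired with $\zeta_1$.
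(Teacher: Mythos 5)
Your verification is correct: the support of $\zeta_1$ lying in the $+1$-eigenspace of $T$, the $-1$-eigenvector property of $e_0$, and the index shift under $U\otimes U$ give exactly the six stated inner products, and the coefficients are real so no conjugation issues arise. The paper itself states this proposition without proof, deferring to the cited reference, and your direct computation is precisely the intended argument.
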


Notice that $T$ is a self-adjoint unitary, so that $T^2=I$. Thus, the unitaries in Proposition \ref{proposition: spatial rep of tensor of z2*z} arise from a unital $*$-homomorphism $\pi:C^*(\bZ_2*\bZ) \to \cB(\ell^2(\bZ))$ given by $\pi(\sigma)=T$ and $\pi(g)=U$, where $\bZ_2*\bZ$ is the free product of the two-element group $\bZ_2$ and the group of integers; $\sigma$ is the generator of the copy of $\bZ_2$ in $\bZ_2*\bZ$, and $g$ is a generator of $\bZ$ in $\bZ_2*\bZ$.

\section{Three Inputs and Two Outputs}

In this section, we will exhibit a correlation in $C_{qs}^{(3)}(3,2)$ that is not in $C_q^{(3)}(3,2)$.

\begin{theorem}
\label{theorem: 3 inputs 2 outputs}
There exists an element $P=(P(a,b|x,y))_{a,b,x,y}$ in $C_{qs}^{(3)}(3,2)$ such that
\begin{align}
P(2,2|2,2)-P(1,2|2,2)-P(2,1|2,2)+P(1,1|2,2)&=\begin{pmatrix} 0 & 0 & \frac{1}{\sqrt{2}} \\ 0 & 0 & \frac{1}{\sqrt{2}} \\ \frac{1}{\sqrt{2}} & \frac{1}{\sqrt{2}} & 0 \end{pmatrix} \label{secondobservables} \\
P(2,2|3,3)-P(1,2|3,3)-P(2,1|3,3)+P(1,1|3,3)&=\begin{pmatrix} 0 & 0 & 1 \\ 0 & 1 & 0 \\ 1 & 0 & 0 \end{pmatrix} \label{thirdobservables} \\
P_A(2|1)-P_A(1|1)=P_B(2|1)-P_B(1|1)&=\begin{pmatrix} 1 & 0 & 0 \\ 0 & -1 & 0 \\ 0 & 0 & -1 \end{pmatrix}. \label{marginals of first observables}
\end{align}
Moreover, there is no element in $C_q^{(3)}(3,2)$ satisfying equations (\ref{secondobservables})--(\ref{marginals of first observables}).
\end{theorem}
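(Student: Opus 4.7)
The plan is to use Proposition \ref{proposition: spatial rep of tensor of z2*z} to build the correlation on $\cH_A = \cH_B = \ell^2(\bZ)$. Since each 2-outcome PVM is parametrised by a self-adjoint unitary $A_x = E_{2,x} - E_{1,x}$, the equations (\ref{secondobservables})--(\ref{marginals of first observables}) ask for self-adjoint unitaries $A_1,A_2,A_3$ on $\cH_A$, $B_1,B_2,B_3$ on $\cH_B$, and an isometry $W:\bC^3 \to \cH_A\otimes\cH_B$ with $W^*(A_x \otimes B_y)W$ and $W^*(A_1 \otimes I)W$ matching the prescribed matrices. I take $A_1 = B_1 = T$, and factor $U$ via the flip $J e_j = e_{-j}$: one checks $JUJ = U^*$, so $UJ$ is a self-adjoint unitary and $U = (UJ)J$. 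Accordingly I set $A_3 = B_3 = J$ and $A_2 = B_2 = UJ$, and define $\eta_1 = \zeta_1$, $\eta_2 = \zeta_2 = e_0 \otimes e_0$, $\eta_3 = (J\otimes J)\zeta_1 = \sum_{k\geq 1}(\sqrt{2})^{-k}\,e_k \otimes e_k$, with $We_j = \eta_j$.

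The verification splits into four checks: (a) $\{\eta_j\}$ is orthonormal, since the three vectors are supported on disjoint index regions ($j<0$, $j=0$, $j\geq 1$); (b) $(A_1 \otimes I)\eta_j = (I \otimes B_1)\eta_j = \pm\eta_j$ with signs $+,-,-$, which follows from the eigenspace decomposition of $T$; (c) $(A_3 \otimes B_3)$ implements the swap $\eta_1 \leftrightarrow \eta_3$ and fixes $\eta_2$, yielding (\ref{thirdobservables}); and (d) $(A_2 \otimes B_2)\eta_3 = (U \otimes U)(J \otimes J)\eta_3 = (U \otimes U)\zeta_1 = \tfrac{1}{\sqrt{2}}(\eta_1 + \eta_2)$, while the other entries of $W^*(A_2 \otimes B_2)W$ vanish. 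Step (d) is the most delicate piece: each of the four zero entries reduces to a disjoint-support calculation in $\ell^2(\bZ) \otimes \ell^2(\bZ)$; for example, $(A_2 \otimes B_2)\zeta_2 = e_1 \otimes e_1$ is orthogonal to both $\zeta_1$ and $\zeta_2$.

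\textbf{Non-existence.} Suppose toward contradiction that such a correlation is in $C_q^{(3)}(3,2)$, coming from finite-dimensional $\cH_A,\cH_B$, PVMs, and an isometry $W$. Let $A_x = E_{2,x}-E_{1,x}$ and $B_y = F_{2,y}-F_{1,y}$, and set $\eta_j = We_j$. Since $A_1 \otimes I$ is a self-adjoint unitary of norm one, the diagonal values $\pm 1$ read off from (\ref{marginals of first observables}) force $(A_1 \otimes I)\eta_j = \pm\eta_j$, and similarly for $I \otimes B_1$. Hence $\eta_1$ is a joint $(+1,+1)$-eigenvector of $(A_1 \otimes I,I \otimes B_1)$, while $\eta_2,\eta_3$ are joint $(-1,-1)$-eigenvectors. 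From (\ref{thirdobservables}), the unit-modulus entries combined with the Cauchy-Schwarz equality case give $(A_3 \otimes B_3)\eta_1 = \eta_3$, $(A_3 \otimes B_3)\eta_2 = \eta_2$, and $(A_3 \otimes B_3)\eta_3 = \eta_1$. Setting $T_1 = A_1$, $T_2 = B_1$, $U_1 = A_2 A_3$, $U_2 = B_2 B_3$, $\widetilde{\zeta}_1 = \eta_1$, $\widetilde{\zeta}_2 = \eta_2$, equation (\ref{secondobservables}) yields
$$\la (U_1 \otimes U_2)\widetilde{\zeta}_1,\widetilde{\zeta}_i\ra = \la (A_2 \otimes B_2)(A_3 \otimes B_3)\eta_1,\eta_i\ra = \la (A_2 \otimes B_2)\eta_3,\eta_i\ra = \tfrac{1}{\sqrt{2}}, \qquad i = 1,2.$$
Together with the eigenvector conditions on $\widetilde{\zeta}_1,\widetilde{\zeta}_2$, this realises the full configuration of Theorem \ref{theorem: spatial infinite-dimensional unitaries} on the finite-dimensional spaces $\cH_A,\cH_B$ (invoking the remark following that theorem to allow unequal dimensions), contradicting the infinite-dimensionality required there. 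I expect the existence half to be the main obstacle: specifically, spotting the flip-based factorisation $U = (UJ)J$ compatible with Proposition \ref{proposition: spatial rep of tensor of z2*z}, and then verifying the four vanishing entries in (\ref{secondobservables}); the non-existence half is then a clean reduction to Theorem \ref{theorem: spatial infinite-dimensional unitaries}.
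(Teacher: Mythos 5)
Your proposal is correct and follows essentially the same route as the paper: your unitaries $T$, $UJ$, $J$ are exactly the paper's $S_1$, $S_2$, $S_3$ (since $UJe_j=e_{-j+1}$ and $Je_j=e_{-j}$), your vectors $\eta_1,\eta_2,\eta_3$ are the paper's $\zeta_1,\zeta_2,\zeta_3$, and the non-existence half is the same Cauchy--Schwarz reduction with $U_1=A_2A_3$, $U_2=B_2B_3$ to Theorem \ref{theorem: spatial infinite-dimensional unitaries}. The only cosmetic difference is that you motivate the construction via the factorisation $U=(UJ)J$ rather than writing the three involutions down directly.
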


\begin{proof}
Let $\cH=\ell^2(\bZ)$, and define unit vectors in $\cH \otimes \cH$ by
\begin{equation}
\zeta_1=\sum_{j<0} (\sqrt{2})^j e_j \otimes e_j, \, \, \, \, \, \zeta_2=e_0 \otimes e_0, \, \, \, \, \, \zeta_3=\sum_{j>0} (\sqrt{2})^{-j} e_j \otimes e_j. \label{32vecs}
\end{equation}
Let $\{e_1,e_2,e_3\}$ denote the canonical orthonormal basis for $\bC^3$. It is easy to see that $\{ \zeta_1,\zeta_2,\zeta_3\}$ is an orthonormal set, so that the map $W:\bC^3 \to \cH \otimes \cH$ given by $We_i=\zeta_i$ for $i=1,2,3$ is an isometry. We define self-adjoint unitaries on the canonical basis vectors $\{e_j\}_{j \in \bZ}$ of $\ell^2(\bZ)$ by
\begin{align}
S_1 e_j&=\begin{cases} -e_j & j \geq 0 \\ e_j & j<0 \end{cases} \\
S_2e_j&=e_{-j+1} \\
S_3 e_j&=e_{-j}
\end{align}
It is straightforward to check that each $S_i$ is unitary and that $S_i^2=I$, so that each $S_i$ is a self-adjoint unitary. Thus, for each $x=1,2,3$, there are PVM's $\{E_{1,x},E_{2,x}\}$ on $\cH$ such that $S_x=E_{1,x}-E_{2,x}$. We let $F_{b,y}=E_{b,y}$ for all $y,b$, and we define $P=(W^*(E_{a,x} \otimes F_{b,y})W)_{a,b,x,y}$, which is an element of $C_{qs}^{(3)}(3,2)$. We note that for each $x=1,2,3$,
\begin{equation}
P(2,2|x,x)-P(2,1|x,x)-P(1,2|x,x)+P(1,1|x,x)=W^*(S_x \otimes S_x)W. \label{marginal in terms of observables}
\end{equation}
On the other hand,
\begin{equation}
P_A(2|1)-P_A(1|1)=W^*(S_1 \otimes I)W \text{ and } P_B(2|1)-P_B(1|1)=W^*(I \otimes S_1)W.
\end{equation}
Using the unitaries defined above, it is routine to check that equations (\ref{secondobservables})--(\ref{marginals of first observables}) are satisfied.

Now, suppose that $\widetilde{P} \in C_q^{(3)}(3,2)$ satisfies equations (\ref{secondobservables})--(\ref{marginals of first observables}). Then there are finite-dimensional Hilbert spaces $\cK_A$ and $\cK_B$, PVM's $\{\widetilde{E}_{1,x},\widetilde{E}_{2,x}\}$ on $\cK_A$ for each $x=1,2,3$, PVM's $\{\widetilde{F}_{1,y},\widetilde{F}_{2,y}\}$ on $\cK_B$ for each $y=1,2,3$, and an isometry $V:\bC^3 \to \cK_A \otimes \cK_B$ such that 
$$\widetilde{P}=(V^*(\widetilde{E}_{a,x} \otimes \widetilde{F}_{b,y})V)_{a,b,x,y}.$$
Since $\widetilde{E}_{1,x}-\widetilde{E}_{2,x}$ is unitary, there is a unital $*$-homomorphism $\pi_A:C^*(*_3 \bZ_2) \to \cB(\cK_A)$ such that $\pi_A(g_x)=\widetilde{E}_{1,x}-\widetilde{E}_{2,x}$, where $g_x$ is the generator of the $x$-th copy of $\bZ_2$ in $*_3 \bZ_2$. Similarly, there is a unital $*$-homomorphism $\pi_B:C^*(*_3 \bZ_2) \to \cB(\cK_B)$ such that $\pi_B(g_x)=\widetilde{F}_{1,y}-\widetilde{F}_{2,y}$.
Set $\eta_i=Ve_i$ for $i=1,2,3$, so that $\{ \eta_1,\eta_2,\eta_3\}$ is orthonormal. Considering the $(3,1)$-entry of equation (\ref{thirdobservables}), we have 
\begin{equation}
\la (\pi_A(g_3) \otimes \pi_B(g_3))\eta_1,\eta_3 \ra=1. \label{third unitary sends eta1 to eta3}
\end{equation}
Using the Cauchy-Schwarz inequality on equation (\ref{third unitary sends eta1 to eta3}), we have
\begin{equation}
\pi_A(g_3) \otimes \pi_B(g_3)\eta_1=\eta_3. \label{g3 actually sends eta1 to eta3}
\end{equation}
Considering the $(1,3)$ and $(2,3)$ entries of equation (\ref{secondobservables}), we see that
\begin{equation}
\la \pi_A(g_2) \otimes \pi_B(g_2)\eta_3,\eta_1 \ra=\la \pi_A(g_2) \otimes \pi_B(g_2)\eta_3,\eta_2 \ra=\frac{1}{\sqrt{2}}. \label{g2 inner products are one over root 2}
\end{equation}
Now, let $T_1=\pi_A(g_1) \otimes I$ and $T_2=I \otimes \pi_B(g_1)$, and set $U_1=\pi_A(g_2)\pi_A(g_3)=\pi_A(g_2g_3)$ and $U_2=\pi_B(g_2)\pi_B(g_3)=\pi_B(g_2g_3)$. Combining equations (\ref{g3 actually sends eta1 to eta3}) and (\ref{g2 inner products are one over root 2}), it follows that
$$\la (U_1 \otimes U_2)\eta_1,\eta_2 \ra=\la (U_1 \otimes U_2)\eta_1,\eta_1 \ra=\frac{1}{\sqrt{2}}.$$
Considering the $(1,1)$ and $(2,2)$ entries of equation (\ref{marginals of first observables}), we also have
$$\la (T_1 \otimes I)\eta_1,\eta_1 \ra=\la (I \otimes T_2)\eta_1,\eta_1 \ra=1$$
and
$$\la (T_1 \otimes I)\eta_2,\eta_2 \ra=\la(I \otimes T_2) \eta_2,\eta_2 \ra=-1.$$
Therefore, the unitaries $T_1,T_2,U_1,U_2$ and the unit vectors $\eta_1,\eta_2$ satisfy equations (\ref{GHJspatialnonfiniteexample})--(\ref{GHJmarginals}) of Theorem \ref{theorem: spatial infinite-dimensional unitaries}, contradicting the assumption that $\cK_A$ and $\cK_B$ are finite dimensional.
\end{proof}

\section{Two Inputs and Three Outputs}

In this section, we will show that $C_q^{(n)}(2,3) \neq C_{qs}^{(n)}(2,3)$ for some $n \in \{2,3,4\}$. Since elements of $C_{qs}^{(n)}(2,3)$ arise from tensor products of representations of $C^*(\bZ_3*\bZ_3)$, we aim to transform the representation of $C^*(\bZ_2*\bZ)$ from Proposition \ref{proposition: spatial rep of tensor of z2*z} into some representation of $C^*(\bZ_3*\bZ_3)$. However, we obtain a simpler group embedding (and hence a smaller number of required unit vectors) by first considering the group $\bZ_2*\bZ_3$. The following fact has a simple proof; we include it for convenience.

\begin{proposition}
\label{proposition: embedding of z2*z into z2*z3}
Let $g$ be the generator of $\bZ_2$; let $h$ be a generator of $\bZ_3$; and let $u$ be a generator of $\bZ$. Then there is an injective group homomorphism $\iota:\bZ_2*\bZ \hookrightarrow \bZ_2*\bZ_3$ such that $\iota(g)=g$ and $\iota(u)=hgh$.
\end{proposition}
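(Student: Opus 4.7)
\textbf{The plan} is to split the argument into two steps: first, construct $\iota$ using the universal property of free products; second, prove injectivity by showing that $\iota$ sends reduced words in $\bZ_2 * \bZ$ to reduced words in $\bZ_2 * \bZ_3$.

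For existence of $\iota$, I would note that the assignment $g \mapsto g$ is a well-defined homomorphism $\bZ_2 \to \bZ_2 * \bZ_3$ since $g^2=e$ holds in the target, and the assignment $u \mapsto hgh$ is automatically a homomorphism $\bZ \to \bZ_2 * \bZ_3$ because $\bZ$ is free on $u$ (no relations to check). The universal property of the free product $\bZ_2 * \bZ$ then produces the desired homomorphism $\iota : \bZ_2 * \bZ \to \bZ_2 * \bZ_3$ with the prescribed values on generators.

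For injectivity, I would invoke the normal form theorem for free products: every non-identity element of $\bZ_2 * \bZ$ has a unique reduced expression
\[
g^{\epsilon_0} u^{n_1} g u^{n_2} g \cdots u^{n_k} g^{\epsilon_k}, \quad \epsilon_0,\epsilon_k \in \{0,1\},\ n_i \in \bZ \setminus \{0\}.
\]
A short induction carried out inside $\bZ_2 * \bZ_3$ gives
\[
(hgh)^n = h g (h^2 g)^{n-1} h \text{ for } n \geq 1, \qquad (hgh)^{-n} = h^2 g (h g)^{n-1} h^2 \text{ for } n \geq 1,
\]
so in either case $\iota(u^n)$ is a reduced word in $\bZ_2 * \bZ_3$ that both begins and ends with a non-identity power of $h$.

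The key observation is then that, when $\iota$ is applied to a reduced word $w$ of the form above, the resulting expression
\[
g^{\epsilon_0}\, \iota(u^{n_1})\, g\, \iota(u^{n_2})\, g \cdots \iota(u^{n_k})\, g^{\epsilon_k}
\]
admits no further reductions: every junction between adjacent blocks pairs a power of $h \in \bZ_3$ with $g \in \bZ_2$. Hence $\iota(w)$ is itself a reduced word in $\bZ_2 * \bZ_3$, and in particular non-identity, so $\ker \iota = \{e\}$. The only delicate point is the bookkeeping for the reduced form of $(hgh)^{\pm n}$; this is a routine computation, and accordingly no substantive obstacle is anticipated, consistent with the author's remark that the proposition has a simple proof.
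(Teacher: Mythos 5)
Your proposal is correct, but it proves injectivity by a different mechanism than the paper. The paper invokes the Ping-Pong Lemma: it sets $X_1=\{w : w \text{ starts with } h \text{ or } h^2\}$ and $X_2=\{w: w \text{ starts with } g\}$, observes $gX_1 \subseteq X_2$ and $hghX_2 \subseteq X_1$, and cites de la Harpe for the conclusion that $\la g\ra * \la hgh\ra$ sits freely inside $\bZ_2*\bZ_3$ (together with a separate remark that $hgh$ has infinite order, which is needed so that the $\bZ$ factor really is $\bZ$). You instead argue directly with normal forms: you compute the reduced words $(hgh)^{n}=hg(h^2g)^{n-1}h$ and $(hgh)^{-n}=h^2g(hg)^{n-1}h^2$ for $n\geq 1$ (these identities check out), note that each begins and ends with a nontrivial power of $h$, and conclude that the image of any reduced word of $\bZ_2*\bZ$ is already reduced in $\bZ_2*\bZ_3$ because every junction alternates between the $\bZ_3$ and $\bZ_2$ factors. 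This is essentially the proof of the Ping-Pong Lemma specialized to this instance, so the two arguments buy the same thing; yours is more self-contained and makes the infinite order of $hgh$ an automatic byproduct rather than a separate observation, at the cost of the explicit bookkeeping with $(hgh)^{\pm n}$, while the paper's version is shorter by outsourcing that combinatorics to a cited lemma. One tiny point of hygiene in your write-up: your displayed normal form for elements of $\bZ_2*\bZ$ with $k\geq 1$ omits the degenerate word $w=g$ itself, which must be handled separately (trivially, since $\iota(g)=g\neq e$); this does not affect the validity of the argument.
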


\begin{proof}
Note that $hgh$ has infinite order since $h$ is order $3$ and powers of $hgh$ do not decrease in word length.  On the other hand, let
$$X_1=\{ w \in \bZ_2*\bZ_3: w \text{ starts with either } h \text{ or } h^2\}$$
and
$$X_2=\{w \in \bZ_2*\bZ_3: w \text{ starts with } g\},$$
where we consider words in reduced form. Clearly $gX_1 \subseteq X_2$ and $hX_2 \subseteq X_1$.  Thus, $hghX_2 \subseteq hgX_1 \subseteq hX_2 \subseteq X_1$. By the Ping-Pong Lemma (see \cite{delaharpe}), the map $\iota:\bZ_2*\bZ \to \bZ_2*\bZ_3$ given by $\iota(g)=g$ and $\iota(u)=hgh$ extends to an injective group homomorphism.
\end{proof}

Using the group embedding from Proposition \ref{proposition: embedding of z2*z into z2*z3}, we can translate the unitaries $T$ and $U$ from equation \ref{GHJunitaries} and the unit vectors $\zeta_1$ and $\zeta_2$ from equation \ref{GHJvectors} to tensor product representations of $C^*(\bZ_2*\bZ_3) \otimes C^*(\bZ_2*\bZ_3)$ that cannot be witnessed by tensor products of finite-dimensional representations. In this case, we only need to specify certain equations governing the representations and the unit vectors using words of length at most three.

\begin{lemma}
\label{lemma: spatial nonfinite for z2*z3}
There is an infinite-dimensional Hilbert space $\cH$, a unital $*$-homomorphism $\sigma:C^*(\bZ_2*\bZ_3) \to \bofh$, and unit vectors $\zeta_1,\zeta_2 \in \cH \otimes \cH$ such that
\begin{align}
\la(\sigma(hgh) \otimes \sigma(hgh))\zeta_1,\zeta_1 \ra&=\la(\sigma(hgh) \otimes \sigma(hgh))\zeta_1,\zeta_2 \ra=\frac{1}{\sqrt{2}}, \label{bab on GHJvectors}\\
\la (\sigma(g) \otimes I)\zeta_1,\zeta_1 \ra&=\la(I \otimes \sigma(g))\zeta_1,\zeta_1 \ra=1, \label{marginals on GHJvector1} \\
\la (\sigma(g) \otimes I)\zeta_2,\zeta_2 \ra&=\la(I \otimes \sigma(g))\zeta_2,\zeta_2 \ra=-1. \label{marginals on GHJvector2}
\end{align}
Moreover, these equations cannot be witnessed by a tensor product of finite-dimensional representations of $C^*(\bZ_2*\bZ_3)$.
\end{lemma}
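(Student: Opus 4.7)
The plan is to promote the representation $\pi$ of $C^*(\bZ_2*\bZ)$ from Proposition \ref{proposition: spatial rep of tensor of z2*z} to a representation $\sigma$ of $C^*(\bZ_2*\bZ_3)$ whose restriction along $\iota$ preserves the relevant operator matrix coefficients, and then to reduce the non-finite-dimensionality statement to Theorem \ref{theorem: spatial infinite-dimensional unitaries}. The natural vehicle is the induced representation $\sigma := \operatorname{Ind}_H^G \pi$, where $G := \bZ_2*\bZ_3$ and $H := \iota(\bZ_2*\bZ) \leq G$; the injectivity of $\iota$ is supplied by Proposition \ref{proposition: embedding of z2*z into z2*z3}.

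Concretely, I would realise $\sigma$ on $\cH := \ell^2(G/H) \otimes \ell^2(\bZ)$ in the standard way: fix coset representatives $\{s_{[x]}\}_{[x] \in G/H}$ with $s_{[e]} = e$, and for $g' \in G$ define $\sigma(g')(\delta_{[x]} \otimes v) := \delta_{[g'x]} \otimes \pi(s_{[g'x]}^{-1} g' s_{[x]}) v$. Then $\sigma$ is a unitary representation of $G$, $\cH$ is infinite-dimensional, and the subspace $\delta_{[e]} \otimes \ell^2(\bZ)$ is $\sigma(H)$-invariant with $\sigma|_H$ acting on it as $\pi$. Letting $V \colon \ell^2(\bZ) \to \cH$ be the isometric inclusion $v \mapsto \delta_{[e]} \otimes v$, this gives $V^*\sigma(h')V = \pi(h')$ for every $h' \in H$; in particular $V^*\sigma(g)V = T$ and $V^*\sigma(hgh)V = U$. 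Setting $\widetilde{\zeta}_i := (V \otimes V)\zeta_i$ for the vectors $\zeta_1,\zeta_2$ of equation \eqref{GHJvectors}, the identity $(V \otimes V)^*(\sigma(w_1) \otimes \sigma(w_2))(V \otimes V) = V^*\sigma(w_1)V \otimes V^*\sigma(w_2)V$ transports the equations of Proposition \ref{proposition: spatial rep of tensor of z2*z} (applied with $(w_1,w_2) \in \{(hgh,hgh),(g,e),(e,g)\}$) directly to \eqref{bab on GHJvectors}--\eqref{marginals on GHJvector2}.

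For the final claim, suppose that finite-dimensional Hilbert spaces $\cK_A,\cK_B$, unital $*$-homomorphisms $\sigma_A \colon C^*(\bZ_2*\bZ_3) \to \cB(\cK_A)$ and $\sigma_B \colon C^*(\bZ_2*\bZ_3) \to \cB(\cK_B)$, and unit vectors $\xi_1,\xi_2 \in \cK_A \otimes \cK_B$ satisfy \eqref{bab on GHJvectors}--\eqref{marginals on GHJvector2} (with $\sigma$ replaced by $\sigma_A$ on the first tensor leg and by $\sigma_B$ on the second). Setting $T_j := \sigma_j(g)$ (a self-adjoint unitary, since $g^2 = e$) and $U_j := \sigma_j(hgh)$ for $j \in \{A,B\}$, these operators together with $\xi_1,\xi_2$ verify precisely equations \eqref{GHJspatialnonfiniteexample}--\eqref{GHJmarginals} of Theorem \ref{theorem: spatial infinite-dimensional unitaries}; this contradicts the finite-dimensionality of $\cK_A$ and $\cK_B$ via the Hilbert-space-pair generalisation of that theorem noted after its statement.

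The main obstacle is the induced representation step: one must verify that the above formula is independent of the choice of coset representatives (up to unitary equivalence) and yields a bona fide unitary representation of $G$ on $\cH$, and that its restriction to $H$ really contains $\pi$ as a subrepresentation in the concrete form above. Once that is in hand the inner-product identities are automatic from Proposition \ref{proposition: spatial rep of tensor of z2*z}, and the non-finiteness conclusion is a direct application of Theorem \ref{theorem: spatial infinite-dimensional unitaries}.
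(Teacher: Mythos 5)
Your proof is correct, but the construction of $\sigma$ in the first half is genuinely different from the paper's. The paper does not use induced representations: it takes the operator system $\cS=\spn\{1,g,hgh,h^2gh^2\}\subseteq C^*(\bZ_2*\bZ_3)$, uses the completely positive conditional expectation onto the range of the embedding $C^*(\bZ_2*\bZ)\hookrightarrow C^*(\bZ_2*\bZ_3)$ (Pisier, Proposition 8.8) to see that $\cS$ is completely order isomorphic to $\cP=\spn\{1,g,u,u^*\}$, restricts $\pi$ to get a unital completely positive map on $\cS$, extends by Arveson's extension theorem, and dilates by Stinespring to obtain $\sigma$ and an isometry $V$ with $V^*\sigma(\cdot)V=\varphi(\cdot)$; the inner-product identities then follow because $\varphi$ agrees with $\pi$ on $\cS$. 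Your route via $\operatorname{Ind}_H^G\pi$ on $\ell^2(G/H)\otimes\ell^2(\bZ)$ reaches the same endpoint more directly and more concretely: the subspace $\delta_{[e]}\otimes\ell^2(\bZ)$ is genuinely $\sigma(H)$-invariant, so the compression $V^*\sigma(\cdot)V$ is multiplicative on $H$ (not merely completely positive on an operator system), the dilation space is explicit, and you avoid Arveson and Stinespring entirely. The trade-off is that your argument leans on the full group embedding $\iota(\bZ_2*\bZ)\cong\bZ_2*\bZ\leq\bZ_2*\bZ_3$ and the standard (but not free) verification that the induced representation is well defined, whereas the paper's operator-system argument only needs a complete order embedding of the four-dimensional space spanned by $1,g,hgh,h^2gh^2$, which is the minimal structure required and generalizes to situations where no subgroup embedding is available. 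Both arguments invoke the Ping-Pong Lemma through Proposition \ref{proposition: embedding of z2*z into z2*z3} (you correctly note that injectivity of $\iota$ is what makes $\pi\circ\iota^{-1}$ a well-defined representation of $H$). Your second half, deducing the non-finite-dimensional witnessing from Theorem \ref{theorem: spatial infinite-dimensional unitaries} by setting $T_j=\sigma_j(g)$ and $U_j=\sigma_j(hgh)$, is identical to the paper's.
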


\begin{proof}
Let $\cS=\spn \{1,g,hgh,h^2gh^2\} \subseteq C^*(\bZ_2*\bZ_3)$. Since $g^*=g$ and $(hgh)^*=h^*gh^*=h^2gh^2$, $\cS$ is an operator system. The group embedding $\bZ_2*\bZ \hookrightarrow \bZ_2*\bZ_3$ from Proposition \ref{proposition: embedding of z2*z into z2*z3} induces an injective $*$-homomorphism $C^*(\bZ_2*\bZ) \hookrightarrow C^*(\bZ_2*\bZ_3)$, with a completely positive expectation onto the range \cite[Proposition 8.8]{pisierbook}. Restricting to $\cS$, we obtain a complete order isomorphism of the operator system $\cP=\spn \{1,g,u,u^*\} \subseteq C^*(\bZ_2*\bZ)$ onto $\cS$ via $g \mapsto g$ and $u \mapsto hgh$. Let $\pi:C^*(\bZ_2*\bZ) \to \cB(\ell^2(\bZ))$ be the unital $*$-homomorphism given by $\pi(g)=T$ and $\pi(u)=U$, where $T,U$ are the operators in Proposition \ref{proposition: spatial rep of tensor of z2*z}. Since $\cS \simeq \cP$, the restriction of $\pi$ to $\cS$ gives a unital completely positive map $\psi:\cS \to \cB(\ell^2(\bZ))$. By Arveson's extension theorem \cite{arveson69}, we may extend $\psi$ to a unital completely positive map $\varphi:C^*(\bZ_2*\bZ_3) \to \cB(\ell^2(\bZ))$. Using Stinespring's dilation theorem \cite{stinespring}, there is a Hilbert space $\cH$, a unital $*$-homomorphism $\sigma:C^*(\bZ_2*\bZ_3) \to \cB(\cH)$ and an isometry $V:\ell^2(\bZ) \to \cH$ such that $\varphi(\cdot)=V^*\sigma(\cdot)V$. Since $V$ is an isometry, we identify $\ell^2(\bZ)$ with $V\ell^2(\bZ)$ and write $\cH=\ell^2(\bZ) \oplus \ell^2(\bZ)^{\perp}$.

Since $\ell^2(\bZ) \otimes \ell^2(\bZ) \subseteq \cH \otimes \cH$, we may identify $\zeta_1=\sum_{j<0} (\sqrt{2}) e_j \otimes e_j$ and $\zeta_2=e_0 \otimes e_0 \in \ell^2(\bZ) \otimes \ell^2(\bZ)$ as unit vectors in $\cH \otimes \cH$. It is not hard to check that equations (\ref{bab on GHJvectors})--(\ref{marginals on GHJvector2}) are satisfied.

Suppose that there are unital $*$-homomorphisms $\pi_A:C^*(\bZ_2*\bZ_3) \to \cB(\cH_A)$ and $\pi_B:C^*(\bZ_2*\bZ_3) \to \cB(\cH_B)$, along with unit vectors $\zeta_1,\zeta_2 \in \cH_A \otimes \cH_B$ satisfying equations (\ref{bab on GHJvectors})--(\ref{marginals on GHJvector2}), where $\cH_A$ and $\cH_B$ are finite-dimensional. Then setting $T_1=\pi_A(g)$, $T_2=\pi_B(g)$, $U_1=\pi_A(hgh)$ and $U_2=\pi_B(hgh)$, we would yield equations (\ref{GHJspatialnonfiniteexample})--(\ref{GHJmarginals}) on a tensor product of finite-dimensional Hilbert spaces, contradicting Theorem \ref{theorem: spatial infinite-dimensional unitaries}.
\end{proof}

We are now in a position to show that $C_q^{(4)}(2,3) \neq C_{qs}^{(4)}(2,3)$. For convenience, for $n \geq 2$, we will let $Q_n:\bC^2 \to \bC^n$ be the isometry sending $\bC^2$ to the first coordinates of $\bC^n$. We also let $\omega=\exp \left( \frac{2\pi i}{3} \right)$.

\begin{theorem}
\label{theorem: 2 inputs 3 outputs}
There exist $n \in \{2,3,4\}$, contractions $A$ and $B$ in $M_n$, and an element $P=P(a,b|x,y) \in C_{qs}^{(n)}(2,3)$ such that
\begin{align}
P(2,2|1,1)-P(1,2|2,2)-P(2,1|2,2)+P(1,1|2,2)&=A \label{matrixA}, \\
P(3,b|1,y)=P(a,3|x,1)&=0, \forall a,b,x,y \label{nothirdoutput} \\
\sum_{a,b=1}^3 \omega^{a+b} P(a,b|2,2)&=B, \label{matrixB} \\
Q_n^*(P_A(2|1)-P_A(1|1))Q_n=Q_n^*(P_B(2|1)-P_B(1|1))Q_n&=\begin{pmatrix} 1 & 0 \\ 0 & -1 \end{pmatrix}, \label{marginalcompressions} \\
\sum_{i=1}^n |B_{i1}|^2&=1, \label{first column of B is a unit vector} \\
\sum_{i=1}^n |(AB)_{i1}|^2&=1, \label{first column of AB is a unit vector} \\
(BAB)_{1,1}=(BAB)_{2,1}&=\frac{1}{\sqrt{2}}. \label{firstcolumnofBAB}
\end{align}
Moreover, for these contractions $A$ and $B$, if $\widetilde{P} \in C_{qs}^{(n)}(2,3)$ satisfies equations (\ref{matrixA})--(\ref{firstcolumnofBAB}), then $\widetilde{P} \not\in C_q^{(n)}(2,3)$.
\end{theorem}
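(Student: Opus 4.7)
The plan is to build an element $P \in C_{qs}^{(n)}(2,3)$ directly from the data of Lemma \ref{lemma: spatial nonfinite for z2*z3}, and then show that any hypothetical element of $C_q^{(n)}(2,3)$ satisfying the same matrix equations can be unwound into unitaries and unit vectors that verify the hypotheses of Theorem \ref{theorem: spatial infinite-dimensional unitaries} on a tensor product of finite-dimensional Hilbert spaces, which is impossible.

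For the construction, I would use the $*$-homomorphism $\sigma: C^*(\bZ_2*\bZ_3) \to \cB(\cH)$ and unit vectors $\zeta_1, \zeta_2 \in \cH \otimes \cH$ from Lemma \ref{lemma: spatial nonfinite for z2*z3}. Define PVMs $\{E_{a,x}\}_{a=1}^3$ on $\cH$ spectrally: let $E_{1,1}, E_{2,1}$ be the spectral projections of the self-adjoint unitary $\sigma(g)$ for eigenvalues $-1, +1$ and set $E_{3,1} = 0$; let $E_{a,2}$ be the spectral projection of $\sigma(h)$ for eigenvalue $\omega^a$, so that $\sum_a \omega^a E_{a,2} = \sigma(h)$. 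Take $F_{b,y} = E_{b,y}$ on the second copy of $\cH$. Let $\cV = \mathrm{span}\{\zeta_1, \zeta_2, (\sigma(h)\otimes\sigma(h))\zeta_1, (\sigma(gh)\otimes\sigma(gh))\zeta_1\}$, which has dimension $n \in \{2,3,4\}$, and pick an isometry $W: \bC^n \to \cV$ with $We_1=\zeta_1$ and $We_2=\zeta_2$. Define $P$ via $P(a,b|x,y) = W^*(E_{a,x} \otimes F_{b,y})W$; then $A = W^*(\sigma(g)\otimes\sigma(g))W$ and $B = W^*(\sigma(h)\otimes\sigma(h))W$ fall out of (\ref{matrixA}) and (\ref{matrixB}). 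Equations (\ref{nothirdoutput}) and (\ref{marginalcompressions}) are immediate from $E_{3,1}=F_{3,1}=0$ and from (\ref{marginals on GHJvector1})--(\ref{marginals on GHJvector2}); equations (\ref{first column of B is a unit vector})--(\ref{first column of AB is a unit vector}) are the assertions that $(\sigma(h)\otimes\sigma(h))\zeta_1$ and $(\sigma(gh)\otimes\sigma(gh))\zeta_1$ lie in $\mathrm{ran}(W) = \cV$, true by construction; and (\ref{firstcolumnofBAB}) follows by chaining these to obtain $BABe_1 = W^*(\sigma(hgh)\otimes\sigma(hgh))\zeta_1$, then using equation (\ref{bab on GHJvectors}).

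For the impossibility direction, suppose $\widetilde P \in C_q^{(n)}(2,3)$ satisfies (\ref{matrixA})--(\ref{firstcolumnofBAB}) with isometry $V: \bC^n \to \cK_A \otimes \cK_B$ and PVMs $\{\widetilde E_{a,x}\}, \{\widetilde F_{b,y}\}$ on finite-dimensional $\cK_A, \cK_B$. Equation (\ref{nothirdoutput}) forces $V^*(\widetilde E_{3,1}\otimes I)V = V^*(I\otimes \widetilde F_{3,1})V = 0$, so $V$ maps into $\ker(\widetilde E_{3,1}) \otimes \ker(\widetilde F_{3,1})$. Let $T_A = \widetilde E_{2,1}-\widetilde E_{1,1}+\widetilde E_{3,1}$ and $T_B = \widetilde F_{2,1}-\widetilde F_{1,1}+\widetilde F_{3,1}$; these are self-adjoint unitaries on $\cK_A, \cK_B$, and $V^*(T_A\otimes T_B)V = A$ because the $\widetilde E_{3,1}, \widetilde F_{3,1}$ contributions are annihilated by $V$ and $V^*$. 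Set $U_A = \sum_a \omega^a \widetilde E_{a,2}$ and $U_B = \sum_b \omega^b \widetilde F_{b,2}$, which are order-three unitaries with $B = V^*(U_A\otimes U_B)V$. Writing $\eta_i = Ve_i$, equation (\ref{marginalcompressions}) with Cauchy--Schwarz forces $(T_A\otimes I)\eta_1 = (I\otimes T_B)\eta_1 = \eta_1$ and $(T_A\otimes I)\eta_2 = (I\otimes T_B)\eta_2 = -\eta_2$. The norm-one conditions (\ref{first column of B is a unit vector}) and (\ref{first column of AB is a unit vector}) successively place $(U_A\otimes U_B)\eta_1$ and $(T_AU_A \otimes T_BU_B)\eta_1$ in $\mathrm{ran}(V)$, so that $VV^*$ acts as the identity on each, yielding $BABe_1 = V^*(U_AT_AU_A \otimes U_BT_BU_B)\eta_1$. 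Equation (\ref{firstcolumnofBAB}) then reads $\langle(U_AT_AU_A \otimes U_BT_BU_B)\eta_1, \eta_i\rangle = 1/\sqrt{2}$ for $i=1,2$. Setting $T_1 = T_A, T_2 = T_B, U_1 = U_AT_AU_A, U_2 = U_BT_BU_B$ and $\zeta_1 := \eta_1, \zeta_2 := \eta_2$, we have produced all of the data of Theorem \ref{theorem: spatial infinite-dimensional unitaries} on $\cK_A \otimes \cK_B$, contradicting the finite-dimensionality of $\cK_A$ and $\cK_B$.

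The main obstacle is managing the third output $\widetilde E_{3,1}$, which need not vanish in the $C_q$ realization, together with the sequential nature of the unwinding. The extension $T_A = \widetilde E_{2,1} - \widetilde E_{1,1} + \widetilde E_{3,1}$ is chosen precisely so that $T_A$ is a self-adjoint unitary on all of $\cK_A$ while still agreeing with $\widetilde E_{2,1} - \widetilde E_{1,1}$ on the effective subspace into which $V$ maps, so $V^*(T_A \otimes T_B)V = A$ is preserved. Once this is arranged, each of (\ref{first column of B is a unit vector}) and (\ref{first column of AB is a unit vector}) has to be used to replace the awkward $VV^*$ by the identity on a specific vector before the next unitary is applied; only this three-step collapse lets $U_A T_A U_A$ appear in the final inner product as the $hgh$-analogue demanded by Theorem \ref{theorem: spatial infinite-dimensional unitaries}. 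Everything else is straightforward bookkeeping through spectral calculus and Cauchy--Schwarz.
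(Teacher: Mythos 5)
Your proposal is correct and follows essentially the same route as the paper: construct $P$ from the Stinespring representation $\sigma$ of $C^*(\bZ_2*\bZ_3)$ in Lemma \ref{lemma: spatial nonfinite for z2*z3} restricted to the span of $\zeta_1,\zeta_2,(\sigma(h)\otimes\sigma(h))\zeta_1,(\sigma(gh)\otimes\sigma(gh))\zeta_1$, then unwind any finite-dimensional realization step by step, using the norm conditions (\ref{first column of B is a unit vector})--(\ref{first column of AB is a unit vector}) to collapse $VV^*$ to the identity before reading off the $hgh$ inner products and contradicting Theorem \ref{theorem: spatial infinite-dimensional unitaries}. Your choices of absorbing $\widetilde{E}_{3,1}$ into the symmetry $T_A$ and phrasing the collapse via membership in $\mathrm{ran}(V)$ are only cosmetic variants of the paper's relabeling of $\widetilde{E}_{2,1}$ and its explicit Cauchy--Schwarz argument with the vectors $\chi_3,\chi_4$.
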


\begin{proof}
Let $\sigma:C^*(\bZ_2*\bZ_3) \to \bofh$ and $\zeta_1,\zeta_2 \in \cH \otimes \cH$ be as in Lemma \ref{lemma: spatial nonfinite for z2*z3}. Then $\sigma_1,\sigma_2$ and $\zeta_1,\zeta_2$ satisfy equations (\ref{bab on GHJvectors})--(\ref{marginals on GHJvector2}). We define intermediate unit vectors
\begin{align}
\xi_3&=(\sigma(h) \otimes \sigma(h))\zeta_1, \label{xi3}\\
\xi_4&=(\sigma(g) \otimes \sigma(g))\xi_3=(\sigma(gh) \otimes \sigma(gh))\zeta_1. \label{xi4}
\end{align}
Although $\{ \zeta_1,\zeta_2\}$ is orthonormal, the set $\{ \zeta_1,\zeta_2,\xi_3,\xi_4\}$ may not be orthonormal. Applying the Gram-Schmidt orthonormalization process, we obtain an orthonormal basis $\zeta_1,...,\zeta_n$ for the subspace $\cM=\spn \{ \zeta_1,\zeta_2,\xi_3,\xi_4\}$ of $\cH \otimes \cH$, where $n=\dim(\cM) \in \{2,3,4\}$. Define $W:\bC^n \to \cH \otimes \cH$ by $We_i=\zeta_i$ for $1 \leq i \leq n$; then $W$ is an isometry. Since $\sigma(g)$ is a self-adjoint unitary, we may write $\sigma(g)=E_{2,1}-E_{1,1}$ for a PVM $\{E_{1,1},E_{2,1}\}$ on $\cH$. We extend this PVM to have three outputs by setting $E_{3,1}=0$. Since $\sigma(h)$ is an order three unitary, there is a PVM $\{E_{1,2},E_{2,2},E_{3,2}\}$ on $\cH$ such that $\sigma(h)=\sum_{b=1}^3 \omega^b E_{b,2}$. Define $F_{b,y}=E_{b,y}$ for all $y=1,2$ and $b=1,2,3$, and set
\begin{equation}
P(a,b|x,y)=W^*(E_{a,x} \otimes F_{b,y})W.
\end{equation}
Then $P=(P(a,b|x,y))_{a,b,x,y}$ defines an element of $C_{qs}^{(n)}(2,3)$. Putting together equations (\ref{xi3}) and (\ref{xi4}), we recover the equation
\begin{align}
(\sigma(h) \otimes \sigma(h))\xi_4=(\sigma(hgh) \otimes \sigma(hgh))\zeta_1=\frac{1}{\sqrt{2}}(\zeta_1+\zeta_2). \label{b otimes b on xi4 in terms of zetas}
\end{align}
Define elements $A,B,C,D$ of $M_n$ by
\begin{align}
A&=W^*(\sigma_1(a) \otimes \sigma_2(a))W, \, \, \, \, \, B=W^*(\sigma_1(b) \otimes \sigma_2(b))W  \label{matrices A and B}\\
C&=W^*(\sigma_1(a) \otimes I)W, \, \, \, \, \, \, \, \, \, \, \, \, \, \, 
D=W^*(I \otimes \sigma_2(a))W. \label{matrices C and D}
\end{align}
Then $A,B,C$ and $D$ are contractions. Moreover, since $$\sigma(g) \otimes \sigma(g)=E_{2,1} \otimes F_{2,1}-E_{1,1} \otimes F_{2,1}-E_{2,1} \otimes F_{1,1}+E_{1,1} \otimes F_{1,1},$$
the correlation $P$ satisfies equation (\ref{matrixA}). Similarly, considering $\sigma(h) \otimes \sigma(h)$, $P$ must satisfy equation (\ref{matrixB}). Equation (\ref{nothirdoutput}) follows since $E_{3,1}=F_{3,1}=0$. By the choice of the unitary $\sigma(a)$ and the unit vectors $\zeta_1$ and $\zeta_2$, equations (\ref{marginals on GHJvector1}) and (\ref{marginals on GHJvector2}) show that
\begin{equation}
Q_n^*CQ_n=Q_n^*DQ_n=\begin{pmatrix} 1 & 0 \\ 0 & -1 \end{pmatrix}. \label{upper 2 by 2 corner of C and D}
\end{equation}
Since $C=P_A(2|1)-P_A(1|1)$ and $D=P_B(2|1)-P_B(1|1)$, we obtain equation (\ref{marginalcompressions}).
Combining equations (\ref{xi3}), (\ref{matrices A and B}), for each $1 \leq i \leq 4$,
\begin{equation}
\langle \xi_3,\zeta_i \rangle=\langle (\sigma(h) \otimes \sigma(h))\zeta_1,\zeta_i \rangle=B_{i1}. \label{xi3 inner products are first column of B}
\end{equation}
Since $\xi_3 \in \cM$, it follows that
\begin{equation}
\xi_3=\sum_{j=1}^n B_{j1} \zeta_i. \label{xi3 in terms of B}
\end{equation}
Moreover, since $\{ \zeta_1,...,\zeta_n\}$ is an orthonormal basis for $\cM$ and $\xi_3$ is a unit vector, we have
\begin{equation}
1=\|\xi_3\|^2=\sum_{j=1}^n |B_{j1}|^2.
\end{equation}  
Thus, $B$ satisfies equation (\ref{first column of B is a unit vector}). Using equation (\ref{xi4}) and applying $\sigma(g) \otimes \sigma(g)$ to equation (\ref{xi3 in terms of B}), we obtain
\begin{align}
\langle \xi_4,\zeta_i \rangle&=\left\la (\sigma(g) \otimes \sigma(g)) \left( \sum_{j=1}^n B_{j1} \right) \zeta_j,\zeta_i \right\ra \\
&= \sum_{j=1}^n B_{j1} \langle (\sigma(g) \otimes \sigma(g))\zeta_j,\zeta_i \rangle \\
&=\sum_{j=1}^n A_{ij}B_{j1}=(AB)_{i1},
\end{align}
where $(AB)_{i1}$ denotes the $(i,1)$-entry of $AB$. Using the fact that $\xi_4 \in \cM$, we obtain
\begin{equation}
\xi_4=\sum_{i=1}^n (AB)_{i1} \zeta_j. \label{xi4 in terms of zetas}
\end{equation}
Since $\xi_4$ is a unit vector, the first column of $AB$ has norm $1$, which yields equation (\ref{first column of AB is a unit vector}).  An analogous argument with equation (\ref{b otimes b on xi4 in terms of zetas}) demonstrates that
\begin{equation}
\frac{1}{\sqrt{2}}(\zeta_1+\zeta_2)=\sum_{i=1}^n (BAB)_{i,1} \zeta_i,
\end{equation}
which forces equation (\ref{firstcolumnofBAB}) to be satisfied. We conclude that $P$, $A$ and $B$ satisfy all of equations (\ref{matrixA})--(\ref{firstcolumnofBAB}).
 
Now, suppose for a contradiction that there is $\widetilde{P}$ in $C_q^{(n)}(2,3)$ satisfying equations (\ref{matrixA})--(\ref{firstcolumnofBAB}). Then there are finite-dimensional Hilbert spaces $\cK_A$ and $\cK_B$, PVMs $\{ \widetilde{E}_{a,x}\}_{a=1}^3$ on $\cK_A$ for $x=1,2$, and PVMs $\{\widetilde{F}_{b,y}\}_{b=1}^3$ on $\cK_B$ for $y=1,2$, along with an isometry $V:\bC^n \to \cK_A \otimes \cK_B$ such that
$$\widetilde{P}=( V^*( \widetilde{E}_{a,x} \otimes \widetilde{F}_{b,y})V )_{a,b,x,y} \in C_q^{(n)}(2,3).$$
By equation (\ref{nothirdoutput}), $\widetilde{P}(3,b|1,y)=\widetilde{P}(a,3|x,1)=0$ for all $a,b,x,y$. By replacing $\widetilde{E}_{2,1}$ with $\widetilde{E}_{2,1}+\widetilde{E}_{3,1}$ if necessary, we may assume without loss of generality that $\widetilde{E}_{3,1}=0$, and that $\{ \widetilde{E}_{1,1},\widetilde{E}_{2,1}\}$ is a PVM.  Similarly, we may assume that $\widetilde{F}_{3,1}=0$, and that $\{ \widetilde{F}_{1,1},\widetilde{F}_{2,1}\}$ is a PVM. Since $\widetilde{E}_{2,1}-\widetilde{E}_{1,1}$ is a self-adjoint unitary and $\sum_{a=1}^3 \omega^a \widetilde{E}_{a,2}$ is an order three unitary, the map $\gamma_A:C^*(\bZ_2*\bZ_3) \to \cB(\cK_A)$ given by $\gamma_A(g)=\widetilde{E}_{2,1}-\widetilde{E}_{1,1}$ and $\gamma_A(h)=\sum_{a=1}^3 \omega^a \widetilde{E}_{a,2}$ extends to a unital $*$-homomorphism. Similarly, there is a unital $*$-homomorphism $\gamma_B:C^*(\bZ_2*\bZ_3) \to \cB(\cK_B)$ such that $\gamma_B(g)=\widetilde{F}_{2,1}-\widetilde{F}_{1,1}$ and $\gamma_B(h)=\sum_{b=1}^3 \omega^b \widetilde{F}_{b,2}$. Since $\widetilde{P}$ satisfies equations (\ref{matrixA}), (\ref{matrixB}) and (\ref{marginalcompressions}), it follows that
\begin{align}
V^*(\gamma_A(g) \otimes \gamma_B(g))V&=A, \, \, \, V^*(\gamma_A(h) \otimes \gamma_B(h))V=B, \text{ and} \\
Q_n^* V^*(I_{\cK_A} \otimes \gamma_B(g))VQ_n&=\begin{pmatrix} 1 & 0 \\ 0 & -1 \end{pmatrix}=Q_n^* V^*(\gamma_A(g) \otimes I_{\cK_B})VQ_n. \label{compression of f.d. marginals}
\end{align}
For $1 \leq i \leq n$, define $\eta_i=Ve_i$. Then $\{\eta_1,...,\eta_n\}$ is an orthonormal set. We define vectors
\begin{align}
\chi_3&=\sum_{i=1}^n B_{i1} \eta_i, \text{ and} \\
\chi_4&=\sum_{i=1}^n (AB)_{i1} \eta_i.
\end{align}
These are unit vectors by equations (\ref{first column of B is a unit vector}) and (\ref{first column of AB is a unit vector}). Applying the Cauchy-Schwarz inequality and noting that $B_{i1}=\la (\gamma_A(h) \otimes \gamma_B(h))\eta_1,\eta_i \ra$, it readily follows that
\begin{align}
(\gamma_A(h) \otimes \gamma_B(h))\eta_1&=\chi_3, \text{ and } \\
(\gamma_A(g) \otimes \gamma_A(g))\chi_3&=\chi_4.
\end{align}
Using equation (\ref{firstcolumnofBAB}) and applying Cauchy-Schwarz again, it follows that
\begin{equation}
(\gamma_A(hgh) \otimes \gamma_B(hgh))\eta_1=(\gamma_A(h) \otimes \gamma_B(h))\chi_4=\frac{1}{\sqrt{2}}(\eta_1+\eta_2). \label{bab f.d. contradiction}
\end{equation}
A similar argument using equation (\ref{compression of f.d. marginals}) demonstrates that
\begin{align}
(\gamma_A(g) \otimes I)\eta_1&=(I \otimes \gamma_B(g))\eta_1=\eta_1, \label{marginals eta_1 f.d. contradiction} \\
(\gamma_A(g) \otimes I)\eta_2&=(I \otimes \gamma_B(g))\eta_2=-\eta_2. \label{marginals eta_2 f.d. contradiction}
\end{align}
Thus, combining equations (\ref{bab f.d. contradiction})--(\ref{marginals eta_2 f.d. contradiction}), we can realize equations (\ref{bab on GHJvectors})--(\ref{marginals on GHJvector2}) in a finite-dimensional tensor product setting, which contradicts Lemma \ref{lemma: spatial nonfinite for z2*z3}. Therefore, we obtain the separation $C_q^{(n)}(2,3) \neq C_{qs}^{(n)}(2,3)$, as desired.
\end{proof}

Combining Theorems \ref{theorem: 3 inputs 2 outputs} and \ref{theorem: 2 inputs 3 outputs} shows that for any $(m,k)$ with $m,k \geq 2$ and $(m,k) \neq (2,2)$, we have $C_q^{(n)}(m,k) \neq C_{qs}^{(n)}(m,k)$ for some matrix level $n$, with $n \leq 4$. This result is optimal with respect to the input and output sets. Indeed, if $m=k=2$, then we have
$$C_q^{(n)}(2,2)=C_{qs}^{(n)}(2,2)=C_{qa}^{(n)}(2,2),$$
since the underlying group, $\bZ_2*\bZ_2$, is amenable and has the property that every irreducible representation is at most $2$-dimensional. On the other hand, while it is still unknown whether $C_q(3,2) \neq C_{qs}(3,2)$ or $C_q(2,3) \neq C_{qs}(2,3)$, Theorem \ref{theorem: matrix version of q always different from qs} provides some partial evidence that these separations may possibly hold.

\section*{Acknowledgements}

This research was conducted during a visit to the University of Copenhagen. The author would like to thank the university for their kind hospitality.  We thank Laura Man\v{c}inska for many helpful insights and feedback. We also thank Li Gao and Marius Junge for their valuable comments.

\end{document}